\documentclass[11pt]{scrartcl}

\usepackage[utf8]{inputenc}
\usepackage[T1]{fontenc}
\usepackage{lmodern}
\usepackage{alphabeta}

\usepackage{titling}
\usepackage{xspace}
\usepackage{soul} 

\usepackage[letterpaper, top=25.4mm, bottom=25.4mm, left=25.4mm, right=25.4mm, includefoot]{geometry}

\DeclareSectionCommand[%
level=4,
indent=0pt,
beforeskip=1ex plus 1ex minus .2ex,
afterskip=-1em,
font={},
tocindent=7em,
tocnumwidth=4.1em,
counterwithin=subsubsection
]{paragraph}

\usepackage[inline]{enumitem}

\usepackage{dsfont}
\usepackage{setspace}

\usepackage{bm}
\usepackage{microtype}
\usepackage{amsmath}
\usepackage{amssymb}
\usepackage{amsfonts}
\usepackage{mathtools}
\usepackage[mathscr]{euscript}

\usepackage[hyphens]{url}
\usepackage{amsthm}

\usepackage{tikz}
\usepackage{xcolor}
\usepackage{subcaption}
\usepackage{graphicx}
\usepackage{wrapfig}

\usepackage{hyperref}
\usepackage{zref-clever}

\usepackage{nicefrac}
\usepackage[textwidth=1.5in]{todonotes}

\colorlet{myGreen}{green!50!black}
\colorlet{myLightgreen}{green}
\colorlet{myRed}{red!90!black}
\definecolor{myBlue}{rgb}{0.25, 0.0, 1.0}
\definecolor{myLightBlue}{rgb}{0.39, 0.58, 0.93}
\colorlet{myViolet}{myBlue!55!myRed}
\definecolor{myOrange}{rgb}{1.0, 0.66, 0.07}

\definecolor{CornflowerBlue}{rgb}{0.39, 0.58, 0.93}
\definecolor{DarkGoldenrod}{rgb}{0.72, 0.53, 0.04}
\definecolor{BritishRacingGreen}{rgb}{0.0, 0.26, 0.15}
\definecolor{DarkMagenta}{rgb}{0.55, 0.0, 0.55}
\definecolor{AO}{rgb}{0.0, 0.5, 0.0}
\definecolor{BostonUniversityRed}{rgb}{0.8, 0.0, 0.0}
\definecolor{myRed}{rgb}{0.8, 0.0, 0.0}
\definecolor{DarkMidnightBlue}{rgb}{0.0, 0.2, 0.4}
\definecolor{DarkTangerine}{rgb}{1.0, 0.66, 0.07}
\definecolor{AppleGreen}{rgb}{0.55, 0.71, 0.0}
\definecolor{BrightUbe}{rgb}{0.82, 0.62, 0.91}
\definecolor{Amethyst}{rgb}{0.6, 0.4, 0.8}
\definecolor{DarkGray}{rgb}{0.52, 0.52, 0.51}
\definecolor{Gray}{rgb}{0.66, 0.66, 0.66}
\definecolor{BananaYellow}{rgb}{1.0, 0.88, 0.21}
\definecolor{Amber}{rgb}{1.0, 0.75, 0.0}
\definecolor{LightGray}{rgb}{0.83, 0.83, 0.83}
\definecolor{PrincetonOrange}{rgb}{1.0, 0.56, 0.0}
\definecolor{DeepCarrotOrange}{rgb}{0.91, 0.41, 0.17}
\definecolor{CarrotOrange}{rgb}{0.93, 0.57, 0.13}
\definecolor{MidnightBlue}{rgb}{0.1, 0.1, 0.44}
\definecolor{Magenta}{rgb}{0.50, 0.0, 0.50}
\definecolor{BrightPink}{rgb}{1.0, 0.0, 0.5}
\definecolor{BrilliantRose}{rgb}{1.0, 0.33, 0.64}
\definecolor{ChromeYellow}{rgb}{1.0, 0.65, 0.0}
\definecolor{HotMagenta}{rgb}{1.0, 0.11, 0.81}
\definecolor{Auburn}{rgb}{0.43, 0.21, 0.1}
\definecolor{BrightTurquoise}{rgb}{0.03, 0.91, 0.87}
\definecolor{DarkCyan}{rgb}{0.0, 0.55, 0.55}

\vfuzz \hfuzz
\setlist[itemize]{topsep=0pt,partopsep=0pt,itemsep=0pt,parsep=0pt}
\setlist[itemize,1]{label={\small\textbullet}}
\setlist[itemize,2]{label={\tiny\textbullet}}
\setlist[itemize,3]{label=$\cdot$}
\setlist[enumerate]{topsep=0pt,partopsep=0pt,itemsep=0pt,parsep=0pt}
\setlist[enumerate,1]{label=\roman*)}
\setlist[enumerate,2]{label=\alph*)}
\setlist[enumerate,3]{label=\arabic*)}

\hypersetup{
colorlinks=true,
linkcolor=AO!65!black,
citecolor=AO!65!black,
urlcolor=AppleGreen!65!black,
bookmarksopen=true,
bookmarksnumbered,
bookmarksopenlevel=2,
bookmarksdepth=3
}

\newcommand*\samethanks[1][\value{footnote}]{\footnotemark[#1]}

\theoremstyle{definition}

\newtheorem{environment}{Environment}[section]

\newtheorem{lemma}[environment]{Lemma}
\AddToHook{env/lemma/begin}{\zcsetup{countertype={environment=lemma}}}
\zcRefTypeSetup{lemma}{
Name-sg = Lemma ,
name-sg = Lemma ,
Name-pl = Lemmas ,
name-pl = Lemmas ,
}

\newtheorem*{lemma*}{Lemma}
\AddToHook{env/lemma*/begin}{\zcsetup{countertype={environment=lemma*}}}
\zcRefTypeSetup{lemma*}{
Name-sg = Lemma ,
name-sg = Lemma ,
Name-pl = Lemmas ,
name-pl = Lemmas ,
}

\AddToHook{env/proposition/begin}{\zcsetup{countertype={environment=proposition}}}
\zcRefTypeSetup{proposition}{
Name-sg = Proposition ,
name-sg = Proposition ,
Name-pl = Propositions ,
name-pl = Propositions ,
}

\newtheorem{corollary}[environment]{Corollary}
\AddToHook{env/corollary/begin}{\zcsetup{countertype={environment=corollary}}}
\zcRefTypeSetup{corollary}{
Name-sg = Corollary ,
name-sg = Corollary ,
Name-pl = Corollaries ,
name-pl = Corollaries ,
}

\newtheorem{theorem}[environment]{Theorem}
\AddToHook{env/theorem/begin}{\zcsetup{countertype={environment=theorem}}}
\zcRefTypeSetup{theorem}{
Name-sg = Theorem ,
name-sg = Theorem ,
Name-pl = Theorems ,
name-pl = Theorems ,
}

\newtheorem*{theorem*}{Theorem}
\AddToHook{env/theorem*/begin}{\zcsetup{countertype={environment=theorem*}}}
\zcRefTypeSetup{theorem*}{
Name-sg = Theorem ,
name-sg = Theorem ,
Name-pl = Theorems ,
name-pl = Theorems ,
}

\AddToHook{env/conjecture/begin}{\zcsetup{countertype={environment=conjecture}}}
\zcRefTypeSetup{conjecture}{
Name-sg = Conjecture ,
name-sg = Conjecture ,
Name-pl = Conjectures ,
name-pl = Conjectures ,
}

\newtheorem*{hypothesis*}{Hypothesis}
\AddToHook{env/hypothesis*/begin}{\zcsetup{countertype={environment=hypothesis*}}}
\zcRefTypeSetup{hypothesis*}{
Name-sg = Hypothesis ,
name-sg = Hypothesis ,
Name-pl = Hypotheses ,
name-pl = Hypotheses ,
}

\AddToHook{env/observation/begin}{\zcsetup{countertype={environment=observation}}}
\zcRefTypeSetup{observation}{
Name-sg = Observation ,
name-sg = Observation ,
Name-pl = Observations ,
name-pl = Observations ,
}

\AddToHook{env/example/begin}{\zcsetup{countertype={environment=example}}}
\zcRefTypeSetup{example}{
Name-sg = Example ,
name-sg = Example ,
Name-pl = Examples ,
name-pl = Examples ,
}

\AddToHook{env/remark/begin}{\zcsetup{countertype={environment=remark}}}
\zcRefTypeSetup{remark}{
Name-sg = Remark ,
name-sg = Remark ,
Name-pl = Remarks ,
name-pl = Remarks ,
}

\zcRefTypeSetup{equation}{
Name-sg = Equation ,
name-sg = Equation ,
Name-pl = Equations ,
name-pl = Equations ,
}

\zcRefTypeSetup{chapter}{
Name-sg = Chapter ,
name-sg = Chapter ,
Name-pl = Chapters ,
name-pl = Chapters ,
}

\zcRefTypeSetup{section}{
Name-sg = Section ,
name-sg = Section ,
Name-pl = Sections ,
name-pl = Sections ,
}

\zcRefTypeSetup{algorithm}{
Name-sg = Algorithm ,
name-sg = Algorithm ,
Name-pl = Algorithms ,
name-pl = Algorithms ,
}

\AddToHook{env/notation/begin}{\zcsetup{countertype={environment=notation}}}
\zcRefTypeSetup{notation}{
Name-sg = Notation ,
name-sg = Notation ,
Name-pl = Notations ,
name-pl = Notations ,
}

\AddToHook{env/question/begin}{\zcsetup{countertype={environment=question}}}
\zcRefTypeSetup{question}{
Name-sg = Question ,
name-sg = Question ,
Name-pl = Questions ,
name-pl = Questions ,
}

\newtheorem{problem}[environment]{Problem}
\AddToHook{env/problem/begin}{\zcsetup{countertype={environment=problem}}}
\zcRefTypeSetup{problem}{
Name-sg = Problem ,
name-sg = Problem ,
Name-pl = Problems ,
name-pl = Problems ,
}

\AddToHook{env/claim/begin}{\zcsetup{countertype={environment=claim}}}
\zcRefTypeSetup{claim}{
Name-sg = Claim ,
name-sg = Claim ,
Name-pl = Claims ,
name-pl = Claims ,
}

\AddToHook{env/definition/begin}{\zcsetup{countertype={environment=definition}}}
\zcRefTypeSetup{definition}{
Name-sg = Definition ,
name-sg = Definition ,
Name-pl = Definitions ,
name-pl = Definitions ,
}

\zcRefTypeSetup{figure}{
Name-sg = Figure ,
name-sg = Figure ,
Name-pl = Figures ,
name-pl = Figures ,
}

\usetikzlibrary{calc}
\usetikzlibrary{fit}
\usetikzlibrary{decorations}
\usetikzlibrary{decorations.pathmorphing}
\usetikzlibrary{decorations.text}
\usetikzlibrary{shapes,hobby}

\tikzset{
	position/.style args={#1:#2 from #3}{
		at=($(#3)+(#1:#2)$)
	}
}

\tikzset{
  v:main/.style = {draw, circle, scale=0.8, thick,fill=black,inner sep=0.7mm},
  v:ghost/.style = {inner sep=0pt,scale=1},
  >={latex},
  e:marker/.style = {line width=8.5pt,line cap=round,opacity=0.35,color=DarkGoldenrod},
  e:main/.style = {line width=1pt},
}

\title{The Erd\H{o}s-P\'osa property for prime-length cycles fails (and beyond)}
\predate{}
\date{}
\postdate{}

\preauthor{}
\DeclareRobustCommand{\authorthing}{
	\begin{center}
		Maximilian Gorsky\thanks{Supported by the Institute for Basic Science (IBS-R029-C1).}~~\!\footnote{\href{mailto:m.gorsky@pm.me}{m.gorsky@pm.me}} \\
		{\small Discrete Mathematics Group, Institute for Basic Science (IBS), Daejeon, South Korea} \\
        \medskip
        Kevin Hendrey\thanks{Supported by the Australian Research Council.}~~\!\footnote{\href{mailto:Kevin.Hendrey1@monash.edu}{Kevin.Hendrey1@monash.edu}} \\
		{\small School of Mathematics, Monash University, Melbourne, Australia} \\
        \medskip
        Tony Huynh\samethanks[1]~~\!\footnote{\href{mailto:tony@ibs.re.kr}{tony@ibs.re.kr}} \\
		{\small Discrete Mathematics Group, Institute for Basic Science (IBS), Daejeon, South Korea}
\end{center}}
\author{\authorthing}
\postauthor{}

\begin{document}
\maketitle

\begin{abstract}
We prove that for every $t \in \mathbb{N}$, prime-length cycles do not have the $\frac{1}{t}$-integral Erd\H{o}s-P\'osa property, even when restricted to planar graphs.  We in fact prove a more general density result.  For every $t \in \mathbb{N}$ and every subset $L \subseteq \mathbb{N}$ with lower density zero, the set of cycles whose length is in $L$ do not have the $\frac{1}{t}$-integral Erd\H{o}s-P\'osa property, even when restricted to planar graphs.  We also consider a less restrictive density condition on $L$, called \emph{porous}, where the complement of $L$ contains arbitrarily long sequences of consecutive integers.  We prove that for every porous set $L \subseteq \mathbb{N}$, the set of cycles whose length is in $L$ do not have the Erd\H{o}s-P\'osa property, even when restricted to projective planar graphs.  
Our results partially answer a question of Gollin, Hendrey, Kwon, Oum, and Yoo~\cite{GHKOY25}.  
\end{abstract}
\let\sc\itshape
\thispagestyle{empty}

\newpage

\newpage
\thispagestyle{empty}

\newpage

\setcounter{page}{1}

\section{Introduction}
 In 1965, Erd\H{o}s and P\'osa~\cite{ErdosP1965Independent} proved that there exists a function $f(k)$ such that for all $k \in \mathbb{N}$ and all graphs $G$, either $G$ contains $k$ vertex-disjoint cycles or there exists a set $X \subseteq V(G)$ with $|X| \leq f(k)$ such that $G-X$ is a forest.  This theorem has been hugely influential in structural graph theory (and beyond) and has spawned an entire research area broadly known as the \emph{Erd\H{o}s-P\'osa property} (to be defined below).  These results are far too numerous to list here, but we refer the interested reader to~\cite{RaymondT2017Recent} for a general survey on the Erd\H{o}s-P\'osa property. 

In particular, there is an extensive line of research on the Erd\H{o}s-P\'osa property for cycles satisfying various length constraints including triangles~\cite{tuza90}, odd cycles~\cite{Reed1999Mangoes,KawarabayashiKKX2025Halfintegral}, even cycles~\cite{BruhnHJ2018Frames,GorskyKKW2024Packing,Gorsky2024Structure}, cycles of length $0 \pmod{m}$~\cite{Thomassen1988Presence},  cycles of length $\ell \pmod{p}$ with $p$ prime~\cite{ThomasY2023Packinga}, and long cycles~\cite{BBR07, CJU20, MNSW17}.  All of the above results can be put into a unified framework which we now describe. 

Let $L \subseteq \mathbb{N}$.  An \emph{$L$-cycle} is a cycle whose length is in $L$.  We say that \emph{$L$-cycles have the Erd\H{o}s-P\'osa property} if there exists a function $f(k)$ such that for all $k \in \mathbb{N}$ and all graphs $G$, either $G$ contains $k$ vertex-disjoint $L$-cycles, or there exists $X \subseteq V(G)$ such that $|X| \leq f(k)$ and $G-X$ has no $L$-cycle.  
More generally, for $t \in \mathbb{N}$, we say that \emph{$L$-cycles have the $\frac{1}{t}$-integral Erd\H{o}s-P\'osa property} if there exists a function $f(k)$ such that for all $k \in \mathbb{N}$ and all graphs $G$,  either $G$ contains a set of $k$ $L$-cycles such that each $v \in V(G)$ is in at most $t$ of these cycles, or there exists $X \subseteq V(G)$ such that $|X| \leq f(k)$ and $G-X$ has no $L$-cycle.  For example, Reed~\cite{Reed1999Mangoes} proved that odd cycles do not have the Erd\H{o}s-P\'osa property, but that they do have the $\frac{1}{2}$-integral Erd\H{o}s-P\'osa property.  Gollin, Hendrey, Kwon, Oum, and Yoo~\cite{GHKOY25} posed the following very general problem. 

\begin{problem} \label{characterize}
Characterize the sets $L \subseteq \mathbb{N}$ such that the set of $L$-cycles have the Erd\H{o}s-P\'osa property.  
\end{problem}

It is easy to see that $L$-cycles have the Erd\H{o}s-P\'osa property whenever $L$ is finite.  Thus,~\zcref{characterize} is only interesting when $L$ is infinite. In particular, the case when $L$ is the set of primes has received considerable attention.\footnote{The prime-length cycle question has been posed at several talks and workshops, most recently by O-joung Kwon at the \emph{Focused Workshop on Erdős-Pósa problems} in Będlewo, Poland, March 15-20, 2026.} Here, we provide the following strong negative answer. 

\begin{theorem} \label{primefailure}
For every $t \in \mathbb{N}$, prime-length cycles do not have the $\frac{1}{t}$-integral Erd\H{o}s-P\'osa property, even when restricted to planar graphs.
\end{theorem}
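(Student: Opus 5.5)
The plan is to construct, for fixed $t$ and a target hitting-set size $m$, a planar graph $G_{t,m}$ with two properties: no $t+1$ prime-length cycles of $G_{t,m}$ (with the multiplicity condition) can be packed so that every vertex lies in at most $t$ of them, yet no set of at most $m$ vertices meets all of them. Then with $k=t+1$ no function $f(k)$ can exist. I only expect to use that the primes have lower density zero, i.e.\ $\liminf_{N\to\infty}|P\cap[1,N]|/N=0$. The proof should therefore work verbatim for any such $L$, which matches the more general statement in the abstract.

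\textbf{Skeleton.} Start from a large planar skeleton $H$ with high "hitting number" for a topologically defined family of cycles. My first choice is a cylindrical grid of height $h$ and circumference $c$, both large compared with $m$ and $t$. The topological family is the cycles that wind around the cylinder once, i.e.\ separate the inner face from the outer face. Any set of $m<h$ vertices misses one of the $h$ disjoint concentric cycles. Conversely, a family of cycles each meeting a fixed region of the grid in a long "crossing" pattern forces heavy overlap. To make overlap forced rather than merely plausible, I will instead arrange that every winding cycle must pass through a narrow "gate" of about $t$ vertices, so any $t+1$ of them share a vertex by pigeonhole. Concretely, I will pinch the grid so that winding cycles cross a bottleneck of width $t$. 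Then I will add many parallel routes of different lengths elsewhere, so that hitting still requires more than $m$ vertices. Deleting a gate vertex kills only routes through that vertex. For this I must make every gate vertex carry a rich, independently hittable system of routes, so the gate must be replaced by something like a grid that is thick but still forces $(t+1)$-wise intersection; this is the topological heart of the construction.

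\textbf{Lengths.} Replace each edge $e$ of the skeleton by a path of length $\ell(e)$. The key is that every cycle outside the desired family has non-prime length, while many cycles in the family, avoiding any $m$ vertices, have prime length. Use lower density zero to choose a scale $N$ such that $|P\cap[1,N]|\le \varepsilon N$, with $\varepsilon$ tiny compared with the number $s$ of "short" (non-winding) cycles up to symmetry. I will pick edge lengths as $\ell(e)=M\cdot a(e)+b(e)$. Here $M$ is large, the $a(e)$ fix the coarse length profile, and the offsets $b(e)$ are chosen greedily or randomly in a window of size about $N$. The set of lengths realised by non-winding cycles is then a set of at most $s$ translates, and with these choices each translate avoids $P$; by density zero, most choices of offsets give this. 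Meanwhile the winding cycles realise a full interval of lengths, of size much larger than $N$, after deleting any $m$ vertices. By choosing the interval inside a range where primes exist (Bertrand), some winding cycle avoiding $X$ has prime length.

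\textbf{Main obstacle.} The hardest step is the interaction of these two demands. Non-winding cycles are exponentially many, so their lengths cannot be controlled one by one; I need their length set to be structurally small, e.g.\ contained in a few arithmetic-like blocks in which the primes can be avoided using sparsity. My first attempt is to make all non-winding cycle lengths lie in $\{Mx+y : |y|\le R\}$ with $x$ ranging over a bounded set, plus a careful choice of $M$ so that each block $[Mx-R,Mx+R]$ is prime-free. At the same time the winding cycles must reach lengths outside these blocks. If prime-free blocks of the needed width do not exist at the needed scale, I will fall back to a $\frac1t$-robust gate gadget where winding cycles add a fixed odd offset.
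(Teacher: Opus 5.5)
\textbf{There is a genuine gap.} Your framing is right: you want any $t+1$ $L$-cycles to share a vertex while no $m$ vertices meet all $L$-cycles. But the proposal never gives a mechanism that forces the $(t+1)$-wise intersection, and the topological one you aim for cannot exist in the plane.

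A gate of about $t$ vertices crossed by every winding cycle is itself a hitting set of size about $t$. That is why you are pushed to a ``thick gate that still forces $(t+1)$-wise intersection''. However, in a plane graph the cycles separating two faces (your winding cycles) obey a Menger-type duality: if no $m$ vertices meet all of them, then $m+1$ of them are pairwise disjoint. Your concentric cycles are exactly such a packing. So in the planar setting topology cannot be the source of the overlap; it has to come from the length restriction, and your length step supplies no such mechanism.

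That step also fails as formulated. Contractible cycles in a large cylindrical grid can traverse as many skeleton edges as winding ones, so their coarse lengths $M\sum a(e)$ are not confined to boundedly many blocks $[Mx-R,Mx+R]$. Separating the two classes by length, as your odd-offset seam does, merely re-encodes the topology. The objection then returns: the prime-length winding cycles would still need an independent reason to intersect.

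\textbf{The missing idea.} The paper drops topology and lets the length condition force every $L$-cycle to be \emph{long in the skeleton}, so that intersection follows by counting.
\begin{itemize}
\item Replace each edge of the $\ell\times\ell$ grid by three internally disjoint paths of lengths $x-1,x,x+1$. Every cycle of the resulting planar graph has length in $[c(x-1),c(x+1)]$, where $c$ is the number of grid edges it traverses ($c=2$ for a cycle inside one gadget). Conversely, every grid cycle with $c$ edges lifts to cycles of every length in that interval.
\item Your prime-free blocks are exactly Lemma~\ref{lem:algebra}. Lower density zero yields $x$ for which these blocks miss $L$ for all $c$ below a threshold $g(x)$, and $g(x)$ can be made arbitrarily large.
\item Hence every $L$-cycle passes through at least $g(x)$ grid vertices. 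Choose $\ell^2$ only slightly above $g(x)$, with $(t+1)g(x)>t\ell^2$; then pigeonhole gives a grid vertex common to any $t+1$ $L$-cycles.
\item Deleting $s$ vertices, with $s$ small compared with $\sqrt{g(x)}/t$, still leaves a subdivided $(\ell-s)\times(\ell-s)$ grid with $(\ell-s)^2\ge g(x)$. The freedom in the parallel paths then lifts a suitable surviving grid cycle to an $L$-cycle.
\end{itemize}
Strictly, that surviving cycle must have a length $c$ whose block meets $L$, for example exactly $g(x)$ edges. Since the grid is bipartite, $g(x)$ is best taken as a minimum over even $c$; the paper's write-up passes over this point.

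With this approach no control of exponentially many individual cycle lengths is needed. All short cycles are excluded at once because their lengths lie in the $L$-free blocks.
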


We actually prove a stronger result, which only requires a density condition on $L$.   We define the \emph{lower density of $L$} to be $\underline{d}(L):=\liminf_{n \to \infty} \frac{|L \cap \{1, \dots, n\}|}{n}$.   

\begin{theorem} \label{lowerdensity}
For every infinite subset $L \subseteq \mathbb{N}$ with $\underline{d}(L)=0$, and every $t \in \mathbb{N}$, the set of $L$-cycles do not have the $\frac{1}{t}$-integral Erd\H{o}s-P\'osa property, even when restricted to planar graphs.
\end{theorem}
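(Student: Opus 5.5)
We will build, for each $k$ (here $k=2$ suffices after fixing $t$, or $k=t+1$) and each candidate bound $f$, a planar graph $G$ in which every family of $L$-cycles with each vertex in at most $t$ of them has size less than $k$, yet no set of at most $f(k)$ vertices meets all $L$-cycles. The natural template is a planar \emph{grid-like} or \emph{cylindrical wall} construction, in the spirit of the counterexamples for odd cycles and for long cycles. Every cycle we want to count must wind once around a central face, and any two such cycles must pairwise intersect in a large set. We then force every $L$-cycle to be of this ``winding'' type by controlling cycle lengths. The density hypothesis $\underline{d}(L)=0$ gives us freedom: for any $\varepsilon>0$ there are arbitrarily large $n$ with $|L\cap[1,n]|<\varepsilon n$, so $L$ has long stretches where it is sparse, and we can choose lengths of gadgets to avoid $L$.

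\textbf{Construction.} Take a large $N$ with $|L\cap[1,N]|$ tiny compared with $N$. Build a planar graph consisting of $m$ concentric ``rings'' joined by radial paths. Subdivide edges so that every cycle that does \emph{not} wind around the centre has length outside $L$. To make this possible, we use the sparsity: subdivide so that short (non-winding) cycles have lengths in a small controlled set of values which we choose to miss $L$. Then scale the remaining paths so that the winding cycles have lengths ranging over an interval of roughly $N$ consecutive-ish values spread by many independent choices, so that some winding cycle lands in $L$ no matter which $f(k)$ vertices are deleted. Since $L$ is infinite, we pick the scale so $[1,N]$ genuinely contains elements of $L$. Concretely, we make the radial/ring structure so that the set of achievable winding lengths after deleting any $f(k)$ vertices still covers an entire interval containing a chosen $\ell\in L$. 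Here the sparsity is used in reverse: non-winding cycle lengths come from a bounded-size family of combinations, which we can shift by a common subdivision parameter to avoid the density-zero set $L$ (a counting/pigeonhole argument: among many shifts $s\le N$, the bad ones number at most $\sum |L\cap[1,N]|$ per combination type, which is $o(N)$ times a constant).

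\textbf{Packing bound.} Planarity plus the winding structure gives the upper bound on packings: any two cycles winding around the same face of a cylinder-like planar embedding must cross, but crossing in the plane forces a shared vertex only if they are disjoint-as-curves impossible; more precisely, two vertex-disjoint winding cycles would be nested, so we design the gadget (e.g.\ a single ``bottleneck'' where all winding cycles pass through one of few vertices in a $t$-fold way, or a Escher-wall–type twist realised planarly by length parity rather than topology) so that at most $t$ winding $L$-cycles can share load. I expect this to be the main obstacle: in the plane, nested winding cycles can be disjoint, so topology alone does not prevent packings. The fix I would try first is to make the length condition do the work: nested winding cycles at different ``depths'' get lengths in disjoint windows, and only one depth window meets $L$, so all $L$-cycles live in one thin annulus of bounded width $w$, where any family with load $\le t$ has size bounded by $O(t)$ — then take $k$ larger than that. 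The hitting-set lower bound follows because within that annulus there are many ($\gg f(k)$) radially disjoint rerouting options, each realising every length in the window.
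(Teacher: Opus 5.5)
There is a genuine gap, and it is the step you flagged yourself. Nothing in the proposal bounds the size of a family of $L$-cycles of load at most $t$, and the depth-window fix defeats itself. Suppose the lengths confine all $L$-cycles to winding cycles in an annulus of width $w$. Each such cycle meets every radial path across that annulus, so deleting the $O(w)$ branch vertices of a single radial path destroys every $L$-cycle. Your ``many radially disjoint rerouting options'' do not help, since every winding cycle passes through every radial position. A thin annulus therefore gives a small cover as well as a small packing. For the cover to exceed $f(k)$, the annulus must contain more than $f(k)$ pairwise disjoint, nested winding cycles; this follows from the Menger-type duality for cycles separating two faces of a plane graph. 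Some mechanism other than depth would then have to prevent $t+1$ $L$-cycles of load at most $t$ in this wide annulus, and you give none.

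There are two further problems. First, the claim that all non-winding cycles can be made to avoid $L$ is unsupported. In a large ring structure they can enclose arbitrarily many faces, so their lengths do not come from a bounded family of combinations. With the rerouting flexibility you want, their lengths fill long intervals. Second, $k=2$ cannot work for $t\ge 2$, since any two cycles have load at most $2$; you need $k=t+1$.

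The missing idea is that the forced intersections should come from volume rather than topology, with all cycles, winding or not, treated uniformly by length. Your shifting/pigeonhole instinct is the right tool for the first step, but you should apply it to all short cycles, not to non-winding ones. Colour each $x$ by a pair $(a,b)$ with $ax+b\in L$ and count. This shows that $\underline{d}(L)=0$ yields $x$ with $ax+b\notin L$ for all $a\in[T]$, $b\in[-T,T]$. Hence $g(x)$, the least $a$ with $ax+b\in L$ for some $|b|\le a$, can be made as large as you like.

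Now replace every edge of an $\ell\times\ell$ grid by three internally disjoint paths of lengths $x-1$, $x$, $x+1$.
\begin{itemize}
\item A cycle through $m$ branch edges has length in $[m(x-1),m(x+1)]$, and every value in that range is realisable. So every $L$-cycle contains at least $g(x)$ of the $\ell^2$ branch vertices.
\item Take $\ell^2$ only slightly larger than $g(x)$, so that $t\,g(x)>(t-1)\ell^2$. Then pigeonhole forces any $t$ $L$-cycles to share a vertex. Applying this with $t+1$ in place of $t$ excludes $t+1$ $L$-cycles of load at most $t$.
\item For the cover, delete $s\ll g(x)^{1/2}$ vertices. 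What remains still contains a subdivided $(\ell-s)\times(\ell-s)$ grid, hence a cycle through a suitable number of branch vertices. The $\pm 1$ freedom in the gadgets lifts it to an $L$-cycle, giving the lower bound on the cover.
\end{itemize}
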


Note that by the Prime Number Theorem~\cite{deLaValleePoussin1896, Hadamard1896}, the set of primes have lower density zero. Thus, \zcref{primefailure} is an immediate corollary of~\zcref{lowerdensity}.

We also consider a relaxed density condition on $L$.  We say that $L$ is \emph{porous} if $\mathbb{N} \setminus L$ contains $n$ consecutive integers for every $n \in \mathbb{N}$.  It is easy to see that every lower density zero set is porous, but that there are porous sets which have positive lower density.  For porous sets $L$, we rule out even the following weak form of the Erd\H{o}s-P\'osa property.

\begin{theorem} \label{porous}
For every infinite porous set $L$, there does not exist a function $f(k)$ such that for all $k \in \mathbb{N}$ and all graphs $G$, either $G$ contains $k$ vertex-disjoint $L$-cycles, or a set of at most $f(k)$ vertices $X$ such that $G-X$ contains at most $f(k)$ cycle lengths from $L$.
\end{theorem}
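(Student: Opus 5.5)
We construct, for $k=2$ and any candidate bound $c=f(2)$, a projective planar graph $G$ with no two vertex-disjoint $L$-cycles in which deleting any $c$ vertices leaves more than $c$ distinct cycle lengths from $L$. Since $k=2$ is fixed and $c$ is arbitrary, no function $f$ works. The template is the Escher-wall construction for odd cycles, suitably modified. Take a long cylindrical grid (a wall) of height $h\gg c$ and circumference $N$, and glue its two boundary cycles with a half-twist (a Möbius-type identification). The result embeds in the projective plane, and the cycles that go around the crosscap (the one-sided, noncontractible ones) pairwise intersect. Two-sided cycles, in particular contractible ones, must be kept out of $L$, and the lengths of the one-sided cycles must sweep through many elements of $L$ robustly.

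\textbf{Controlling lengths.} Porosity gives arbitrarily long gaps in $L$. Choose a gap $\{a+1,\dots,a+m\}\subseteq \mathbb{N}\setminus L$ with $m$ huge compared to everything else, and subdivide edges so that every contractible cycle and every two-sided noncontractible cycle has length in such a gap. Equivalently, these cycles avoid $L$ altogether, which we arrange by making all their lengths multiples of a large structural unit lying in gaps; choosing edge lengths with parity/scale tricks and using that $L$ is infinite with infinitely many long gaps makes this possible.

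The one-sided cycles should realize many lengths. Insert, along the twisted gluing, $c+1$ disjoint \emph{gadgets}, each a small parallel system of paths whose lengths can be chosen. By selecting different routes through the gadgets, one-sided cycles realize a family of lengths that includes $c'\gg c$ elements of $L$. Here we use that $L$ is infinite: pick $\ell_1<\dots<\ell_{c'}$ in $L$ and tune the gadget path lengths so that each $\ell_i$ is achieved by many vertex-disjoint-ish routings (using the $h$ rows of the wall). Deleting $c$ vertices then kills only $c$ rows, so every $\ell_i$ still appears. The claim that two disjoint $L$-cycles do not exist follows because every $L$-cycle is one-sided in the projective plane and any two one-sided closed curves there intersect, at a vertex by planarity of the embedding.

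\textbf{Main obstacle.} The hard step is simultaneously keeping all two-sided cycles (contractible ones, and those winding twice) out of $L$ while allowing one-sided cycles to hit prescribed lengths in $L$. Arbitrary combinations of gadget routes and wall detours produce many lengths. I would first try to make all edges of the wall have a common large length $M$ placed so that every cycle using only wall edges has length in $\{jM\}$ with each $jM$ inside a gap of $L$; porosity lets us pick $M$ and the gaps iteratively since the set of relevant $j$ is bounded (the graph is finite). The gadgets then contribute a bounded offset, and we only need the offsets of one-sided cycles to land in $L$. The remaining offsets, from two-sided cycles which pass the gadgets an even number of times, must avoid $L$, handled again by placing them inside one long gap of width exceeding the total offset range.
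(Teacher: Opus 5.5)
Your topological skeleton is the right one: fix $k=2$, build a projective-planar grid-like graph in which every $L$-cycle is forced to be one-sided, and let the number of surviving $L$-lengths grow with $c$. However, there is a genuine gap in the number-theoretic step, which is where porosity must actually be used. The mechanism you propose for it does not work. Porosity gives only \emph{additively} long gaps, whereas with a common edge length $M$ the wall cycles have lengths $jM$ for a whole range of $j$, so these lengths spread multiplicatively.

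Concretely, $L=\bigcup_{k\ge 1}[4^k,2\cdot 4^k]$ is infinite and porous. Yet for every $M$, take $k$ minimal with $4^k\ge 4M$. Then $[4^k,2\cdot 4^k]$ is a subset of $L$ lying inside $[4M,32M)$ and has length at least $4M$. It therefore contains $jM$ for some even $j$ with $4\le j<32$. A large grid has a rectangle with exactly $j$ edges, so some wall-only cycle is an $L$-cycle however $M$ is chosen.

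The same objection defeats two other steps. It rules out putting all contractible cycles into a single long gap. It also undermines your treatment of two-sided cycles that cross the twist an even number of times: their lengths are $jM$ plus gadget offsets for many different $j$, and nothing places all of them in gaps. Separately, gluing the two boundary cycles of a cylinder gives a torus or Klein bottle, not the projective plane, and the Klein bottle has disjoint one-sided cycles. What you want is a planar grid plus edges joining one side to the opposite side in reversed order.

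The missing ideas are two greedy choices.
\begin{enumerate}
\item Give the grid distinct weights all of whose finite subset sums avoid $L$. If the weights chosen so far sum to $\sigma$, take the next one $y>\sigma$ with $[y,y+\sigma]\cap L=\emptyset$; this uses exactly the additive gaps. Then no cycle inside the grid is an $L$-cycle, whatever its shape.
\item Let $\alpha$ be the total grid weight. Fix $3(c+1)$ paths $Q_i$ in the grid between interleaved boundary terminals $a_i,b_i$, with every vertex on at most two of them. Choose $p_1<p_2<\cdots$ in $L$ with $p_1\ge 2\alpha$ such that the next element of $L$ after $p_k$ is at least $p_k+\sum_{i<k}p_i+\alpha$; this is possible since $L$ is infinite and porous. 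Give the cross edge $e_i=a_ib_i$ weight $p_i-w(Q_i)$. A cycle using at least two cross edges, with largest index $k$, then has weight strictly between $p_k$ and $p_k+\sum_{i<k}p_i+\alpha$, so it is not an $L$-cycle.
\end{enumerate}
Hence every $L$-cycle uses exactly one cross edge, and any two such cycles meet by planarity of the grid. Robustness is then automatic: deleting $c$ vertices destroys at most $2c$ of the paths, leaving at least $c+3$ cycles $Q_i+e_i$ with distinct lengths $p_i\in L$. You never need each prescribed length to survive individually, as your gadget plan attempts.
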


\section{Lower density zero sets}
In this section, we will prove~\zcref{lowerdensity}.  For $s,t \in \mathbb{Z}$, we let $[s,t]:=\{s, s+1, \dots, t\}$ and $[t]:=[1, \dots, t]$. We begin with a simple lemma on constructing special subsets of integers which avoid lower density zero sets.

\begin{lemma}\label{lem:algebra}
    Let $L \subseteq \mathbb{N}$ have lower density zero.  Then for every $t \in \mathbb{N}$ there is some $x_t\in \mathbb{N}$ such that $L\cap \{ax_t+b:a\in [t], b\in [-t, t]\}=\emptyset$.
\end{lemma}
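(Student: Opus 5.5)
We argue by counting: lower density zero gives infinitely many $n$ with $|L\cap[n]|\le \varepsilon n$, for $\varepsilon$ as small as we like, and at such a scale there must be a choice of $x$ for which none of the at most $t(2t+1)$ numbers $ax+b$ lands in $L$. Fix $t\in\mathbb{N}$, and for $x\in\mathbb{N}$ write $S(x):=\{ax+b : a\in[t],\, b\in[-t,t]\}$. We need some $x$ with $S(x)\cap L=\emptyset$. Since every element of $S(x)$ is at most $tx+t$, all the relevant numbers live in an initial segment, and we will bound the number of "bad" $x$ by counting.

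Choose $\varepsilon := \frac{1}{4t^2(2t+1)}$ (any sufficiently small constant depending only on $t$ will do). Because $\underline{d}(L)=0$, there are arbitrarily large $n$ with $|L\cap[n]|\le \varepsilon n$. Fix such an $n$, large compared with $t$, and put $N:=\lfloor (n-t)/t\rfloor$, so that for every $x\in[N]$ we have $S(x)\subseteq[-t+1,\,n]$; take $x\ge t+1$ if we want all elements of $S(x)$ to be positive. Call $x\in[N]$ \emph{bad} if $S(x)\cap L\neq\emptyset$.

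The key step is the observation that each $\ell\in L$ makes only a bounded number of $x$ bad. Indeed, $\ell\in S(x)$ means $\ell=ax+b$ for some $a\in[t]$ and $b\in[-t,t]$. For each of the $t(2t+1)$ pairs $(a,b)$ this determines $x=(\ell-b)/a$ uniquely, if it exists at all. Hence each $\ell$ is responsible for at most $t(2t+1)$ bad values of $x$. Summing over $\ell\in L\cap[n]$, the number of bad $x\in[N]$ is at most
\[
t(2t+1)\,|L\cap[n]| \;\le\; t(2t+1)\,\varepsilon n \;=\; \frac{n}{4t}.
\]
On the other hand $N\ge n/t-2$, so at least $N-\frac{n}{4t}-t>0$ values of $x\in[t+1,N]$ are good once $n$ is large. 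Any good $x$ serves as $x_t$.

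There is no real obstacle here. The only care needed is bookkeeping: we must use a $\liminf$ of zero to obtain \emph{some} large $n$ with small count (a $\limsup$ is not available), and we must make sure every element of $S(x)$ lies in $[n]$ (or is nonpositive, hence not in $L\subseteq\mathbb{N}$), so that the count $|L\cap[n]|$ really controls all collisions.
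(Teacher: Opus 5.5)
Your proof is correct and is essentially the paper's argument. The paper argues by contradiction: it colours each $x$ by a witness pair $(a,b)$ with $ax+b\in L$, then uses pigeonhole plus injectivity of $x\mapsto ax+b$ to force $|L\cap[N]|\ge N/(t(2t+1)^2)$, which is just the contrapositive form of your union bound over the $t(2t+1)$ pairs $(a,b)$ and the elements of $L\cap[n]$. Your direct version additionally shows that a positive proportion of $x\le n/t$ work, and your bookkeeping (all of $S(x)$ lies in $[-t+1,n]$, and nonpositive values cannot lie in $L$) is handled correctly.
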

\begin{proof}
Suppose this fails for some $t$.
For each $x\in \mathbb{N}$ choose $a \in [t]$ and $b \in [-t,t]$ such that $ax+b\in L$.  Let the colour of $x$ be $(a,b)$.  Since $\underline{d}(L)=0$, there exists $N \geq (2t+1)^3$ such that $|L\cap [N]|/N< \frac{1}{(2t+1)^3}$.
Since there are only $t(2t+1)$ colours, some colour appears at least $\frac{N}{t(2t+1)^2}$ times among the first $\frac{N}{2t+1}$ elements of $[N]$.  Thus, $|L \cap [N]| \geq \frac{N}{t(2t+1)^2}$. However, this contradicts $|L\cap [N]|/N< \frac{1}{(2t+1)^3}$.
\end{proof}

For each positive integer $\ell$, the \emph{$(\ell \times \ell)$-grid} is a graph with the vertex set $[\ell] \times [\ell]$, where for all $i,j,i',j' \in [\ell]$ the vertex $(i,j)$ is adjacent to $(i',j')$ if and only if $|i-i'|+|j-j'|=1$.  We are now ready to prove~\zcref{lowerdensity} in the following form.  

\begin{theorem} \label{sparsegrid}
    Let $L \subseteq \mathbb{N}$ be an infinite set with lower density zero. 
    For all $t,s \in \mathbb{N}$, there exists a planar graph $G$ such that
    \begin{enumerate}
        \item For every collection of $t$ $L$-cycles in $G$, there is a vertex in their common intersection, and,
        \item For all $S\subseteq V(G)$ with $|S|\leq s$, $G-S$ contains an $L$-cycle.
    \end{enumerate}
\end{theorem}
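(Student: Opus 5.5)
The plan is to force every $L$-cycle in $G$ to be \emph{almost spanning}: if every $L$-cycle contains more than a $(1-\frac1t)$-fraction of $V(G)$, then any $t$ of them share a vertex by counting, which gives (i). The tool for excluding short cycles is \zcref{lem:algebra}. Fix $n\gg s,t$ and set $T:=Cn^2$ for a suitable constant $C$. Apply \zcref{lem:algebra} with parameter $T$ to obtain $x=x_T$ with $L\cap\{ax+b: a\in[T],\ b\in[-T,T]\}=\emptyset$.

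Let $G$ be the $(n\times n)$-grid with every edge replaced by a path of length $x$, except that a small set of edges get length $x+\delta_e$ with $|\delta_e|$ small; $G$ is planar. A cycle of $G$ that uses $a$ grid edges has length $ax+b$, where $|b|\le\sum_e|\delta_e|\le T$ and $a\le 2n^2\le T$. By the choice of $x$, such a length is \emph{never} in $L$. So $G$ as described has no $L$-cycles at all, and the construction must be modified before it can satisfy (ii).

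The real design therefore runs in the opposite direction. First pick $\ell\in L$ very large, and only then choose the grid scale. Let $x$ be the largest value of the form supplied by the lemma that is compatible with $\ell$. Any cycle using at most $T'$ grid edges, where $T'$ is somewhat smaller than the total number of grid edges, has length of the form $ax+b$ with $a\le T'$, so it is not an $L$-cycle. Only cycles using more than $T'$ grid edges, out of roughly $2n^2$, can have length $\ell$, and such cycles contain at least a $(1-\frac1t)$-fraction of the vertices once $T'/(2n^2)>1-\frac1{t}$. To make this work, apply the lemma with $t$-parameter $T'$ and offset range covering the perturbations. The subdivided grid contains on the order of $2n^2x$ vertices, while each short cycle has length $ax+b$ with $a\le T'$. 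So the counting in (i) goes through as soon as $(T'x-T)/|V(G)|>1-\frac1t$.

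For (ii), given $S$ with $|S|\le s$, I route a long cycle in the grid that avoids the at most $s$ grid cells meeting $S$. Such a cycle can be made to use all but $O(s)$ of the grid edges: a near-Hamiltonian "comb" cycle can detour around $O(s)$ bad cells. Since $n\gg s$, this cycle still has more than $T'$ grid edges. Its length is then tuned to equal $\ell$ exactly, using the perturbation edges (lengths $x+\delta_e$) together with local choices of which grid edges to include or skip.

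The main obstacle is exact length control. The family of available near-spanning cycles avoiding $S$ must realise the \emph{specific} value $\ell\in L$ for every choice of $S$, and the parameters $\ell$, $x$, $n$ must be chosen consistently: $\ell$ must be approximately $(\#\text{grid edges used})\cdot x$. I would try to handle this by planting, in several far-apart regions of the grid, gadgets whose inclusion changes the cycle length by $\pm1$ and by $\pm x$. Each gadget would be a pair of parallel routes of lengths $x$ and $x+1$ inside a single cell. Then $\ell$ is chosen in $L$ inside the interval of lengths achievable by these near-spanning cycles; this interval has length at least $x$ because the gadgets give flexibility $\pm x$. Since the gadgets are spread out, any $s$ deletions leave enough of them intact to hit $\ell$ exactly, while cycles avoiding many grid edges remain of the forbidden form $ax+b$.
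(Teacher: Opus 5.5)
Your overall strategy is the paper's: subdivide a grid by about $x$ so that short cycles have lengths $ax+b$ excluded by Lemma~\ref{lem:algebra}, and force $L$-cycles to be near-spanning. There is, however, a genuine gap at the central step: nothing guarantees that $G$ has any $L$-cycle at all.

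\textbf{Existence of $L$-cycles.} The lemma only says where $L$ is \emph{absent}. Your grid size fixes $T'$, and $x$ is then produced from $T'$. So you need some $\ell\in L$ among the lengths realisable by near-spanning cycles, and a set of lower density zero can miss all of them. This is exactly the failure you found in your second paragraph.
\begin{itemize}
\item Picking $\ell\in L$ first does not help, because the lemma returns some $x$ with no relation to $\ell$.
\item The gadgets do not help either. With perturbations on only a few edges, the realisable lengths form narrow clusters around the values $ax$ (with $a$ even, the grid being bipartite). Changing the number of grid edges moves the length in steps of about $x$, so there is no interval of length $x$ for $\ell$ to land in.
\end{itemize}

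The missing idea is to reverse the order of choices.
\begin{itemize}
\item Fix $x$ first, and let $g(x)$ be the least $a$ with $ax+b\in L$ for some $b\in[-a,a]$. The lemma makes $g(x)$ arbitrarily large, and it is finite because $L$ is infinite.
\item Only then choose the grid side $\ell$ with $\frac{2t}{2t-1}g(x)\le\ell^2\le\frac{3t}{3t-2}g(x)$.
\item Replace \emph{every} edge by three parallel paths of lengths $x-1,x,x+1$.
\end{itemize}
A grid cycle with $a$ edges then lifts to exactly the lengths in $[a(x-1),a(x+1)]$. Hence grid cycles with fewer than $g(x)$ edges never yield $L$-cycles, while one with exactly $g(x)$ edges does. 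The offsets $|b|\le a$ fall within the range the lemma controls, so there is no need to restrict perturbations to a few edges.

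\textbf{Counting for (i).} This also fails as you set it up. A cycle in the $n\times n$ grid has at most $n^2$ edges, about half of the $2n(n-1)$ grid edges. So no cycle of the subdivided graph contains much more than half of $V(G)$. For $t\ge 2$, your threshold $T'>(1-\frac1t)\cdot 2n^2\ge n^2$ exceeds the length of every grid cycle, which leaves no candidate $L$-cycles. Likewise, no cycle can use ``all but $O(s)$'' of the grid edges.

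The pigeonhole must be run over branch vertices only. Each $L$-cycle passes through at least $g(x)$ of the $\ell^2$ grid vertices, and $t\,g(x)>(t-1)\ell^2$ by the choice of $\ell$.

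\textbf{Part (ii).} One may assume $S$ consists of grid vertices. Deleting the at most $s$ rows and $s$ columns meeting $S$ leaves a subdivided grid on at least $(\ell-s)^2\ge g(x)$ branch vertices once $s<g(x)^{1/2}/(4t)$. What is needed there is a cycle with exactly $g(x)$ grid edges; a longer one need not lift to an $L$-cycle. Since the grid is bipartite, the minimum defining $g(x)$ is best taken over even $a$. This is a single known target, not an unknown element of $L$.
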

\begin{proof}
For each $x\in\mathbb{N}$, let $g(x)$ be the minimum positive integer such that $g(x)x+b\in L$, for some $b \in [-g(x), g(x)]$.   Note that every positive integer can be written in the form $qx+r$ with $0 \leq r \leq x$.  Thus, $g(x)$ is finite, since $L$ contains elements larger than $x^2$.  
By Lemma~\ref{lem:algebra}, there exists $x$ with $g(x)$ arbitrarily large.  
Thus, we may choose $x$ such that $\frac{g(x)^{1/2}}{4t} > s$, and there exists an $\ell \in \mathbb{N}$ such that $\frac{2tg(x)}{2t-1} \leq \ell^2 \leq \frac{3tg(x)}{3t-2}$.

Let $G_{\ell \times \ell}$ be the $(\ell \times \ell)$-grid and let $G$ be obtained from $G_{\ell \times \ell}$ by replacing each edge with three internally disjoint paths of lengths $x-1$, $x$, and $x+1$, respectively.  
By the choice of $g(x)$, every $L$-cycle in $G$ corresponds to a cycle of length at least $g(x)$ in $G_{\ell \times \ell}$. Since $\frac{tg(x)}{|V(G_{\ell \times \ell})|} > t-1$, every collection of $t$ $L$-cycles in $G$ has a vertex in their common intersection.

For the second part, towards a contradiction, let $S \subseteq V(G)$ be such that $|S|=s$ and $G-S$ has no $L$-cycle.   We may clearly assume $S \subseteq V(G_{\ell \times \ell})$. Observe that $G_{\ell \times \ell}-S$ contains a subdivision of $G_{(\ell-s) \times (\ell-s)}$.  Thus, $G_{\ell \times \ell}-S$ contains a cycle $C$ of length at least 
\[
(\ell-s)^2 \geq \left( \left(\frac{2t}{2t-1}\right)^{1/2} g(x)^{1/2} - \frac{1}{4t} g(x)^{1/2} \right)^2 \geq g(x),  
\]
where the last inequality follows from the easy observation that $\left(\frac{2t}{2t-1}\right)^{1/2} - \frac{1}{4t} \geq 1$, for all $t \in \mathbb{N}$.  
Because $C$ lifts to an $L$-cycle in $G-S$, we conclude that $G-S$ contains an $L$-cycle, which is a contradiction.   Thus, $|S| > s$, as required.   
\end{proof}

\section{Porous sets}

In this section we will prove~\zcref{porous}.  We begin with a simple lemma on constructing special subsets of integers which avoid porous sets.



\begin{lemma} \label{lem:farfromL}
Let $L$ be a porous subset of $\mathbb{N}$. Then there is an infinite set $S\subseteq \mathbb{N}$ such that for every nonempty finite $S'\subseteq S$, the sum of the elements of $S'$ is not in $L$.
\end{lemma}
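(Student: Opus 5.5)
We construct $S=\{s_1<s_2<\dots\}$ inductively, maintaining the invariant that every nonempty subset sum of $\{s_1,\dots,s_k\}$ lies outside $L$. The idea is that the finitely many subset sums already formed lie in a bounded range. Porosity then lets us place the next element so that it and all its translates by the old sums land in a gap of $L$.

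Suppose $s_1,\dots,s_k$ have been chosen with the invariant. Let $\Sigma_k$ be the set of all subset sums of $\{s_1,\dots,s_k\}$, including the empty sum $0$, and let $M_k=s_1+\dots+s_k=\max\Sigma_k$. Every new nonempty subset sum that uses $s_{k+1}$ has the form $s_{k+1}+\sigma$ with $\sigma\in\Sigma_k$, so it lies in the interval $[s_{k+1},s_{k+1}+M_k]$.

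Since $L$ is porous, $\mathbb{N}\setminus L$ contains $n$ consecutive integers for every $n$. Moreover, it contains arbitrarily far-out runs of any fixed length. Indeed, a run of $n'$ consecutive non-elements with $n'\ge n+N$ contains a run of $n$ consecutive non-elements all exceeding $N$, whichever of the positive integers it starts at. So we choose a run $[a,a+M_k]\subseteq\mathbb{N}\setminus L$ with $a>M_k$ (this also gives $a>s_k$), and set $s_{k+1}:=a$.

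We now check the invariant for $\{s_1,\dots,s_{k+1}\}$. A nonempty subset not containing $s_{k+1}$ has sum outside $L$ by the induction hypothesis. A subset containing $s_{k+1}$ has sum $a+\sigma$ with $0\le\sigma\le M_k$, so its sum lies in $[a,a+M_k]\subseteq\mathbb{N}\setminus L$. The base case is the same argument with $M_0=0$: pick $s_1\notin L$. Taking $S=\{s_k:k\in\mathbb{N}\}$, every finite nonempty $S'\subseteq S$ lies in some $\{s_1,\dots,s_k\}$, so its sum is not in $L$.

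The only step that needs care is extracting gaps beyond any prescribed threshold from the porosity definition, since the definition does not say where the runs occur. This is handled by taking a longer run and discarding its initial part, as above; everything else is routine bookkeeping.
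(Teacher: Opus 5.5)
Your proof is correct and follows essentially the same route as the paper. Both build the set inductively, choosing the new element $y>\ell=\sum Z$ so that the entire window $[y,y+\ell]$ avoids $L$. Your explicit trimming argument, which shows that porosity yields gaps beyond any prescribed threshold, fills in a step the paper leaves implicit.
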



\begin{proof}
For $X \subseteq \mathbb{N}$, let $\sum X$ denote the sum of the elements in $X$.  
We say that $X$ is \emph{far from $L$} if $\sum X' \notin L$ for all nonempty finite $X' \subseteq X$.  Let $Z$ be an arbitrary finite subset of $\mathbb{N}$ which is far from $L$.  It suffices to show that there exists $y \notin Z$ such that $Z \cup \{y\}$ is far from $L$.  Let $\ell:=\sum_{z \in Z} z$.  Since there are arbitrarily long intervals in $\mathbb{N}\setminus L$, there exists $y > \ell$ such that $[y, y+\ell] \cap L = \emptyset$.  Thus, $Z \cup \{y\}$ is far from $L$, as required.
\end{proof}

We are now ready to prove~\zcref{porous} in the following form.

\begin{theorem} \label{thm:porousgrid}
    Let $L$ be an infinite porous subset of $\mathbb{N}$.  
    For each $\ell \geq 1$ there exists a projective planar graph $G$ such that $G$ does not contain two vertex-disjoint $L$-cycles and $G-X$ contains at least $\ell$ cycle lengths in $L$ for all $X \subseteq V(G)$ with $|X| \leq \ell$. 
\end{theorem}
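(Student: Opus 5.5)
The plan is to build $G$ from a projective-planar grid-like graph $H$ (a large cylindrical wall whose two boundary cycles are glued antipodally, so that $H$ embeds in the projective plane) in which every two \emph{one-sided} cycles intersect. We then subdivide and gadget the edges so that every two-sided cycle of $G$ has length outside $L$. This immediately gives that $G$ has no two vertex-disjoint $L$-cycles, because two disjoint cycles in the projective plane cannot both be one-sided. Concretely, fix a ``cut'' curve crossing the crosscap, and let $E_c \subseteq E(H)$ be the edges meeting it. A cycle of $H$ is one-sided if and only if it uses an odd number of edges of $E_c$. Non-cut edges will be subdivided into paths whose lengths are distinct elements of the set $S$ from \zcref{lem:farfromL}. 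Hence any cycle avoiding $E_c$ has length equal to $\sum S'$ for a nonempty finite $S'\subseteq S$, which is not in $L$.

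For the cut edges, I will exploit porosity together with the infinitude of $L$ in a specific form. For every $N$ there is a $p\in L$ with $[p+1,p+N]\cap L=\emptyset$: take a gap of length $2N$, and let $p$ be the largest element of $L$ below it, which exists once we choose the gap beyond $\min L$. I will order the cut edges $e_1,\dots,e_m$ and choose their lengths greedily and very rapidly increasing. Let $N_k$ exceed the total length of everything already placed, namely all $S$-paths and all $e_i$-gadgets with $i<k$. Choose $p_k\in L$ with $[p_k+1,p_k+N_k]\cap L=\emptyset$, and replace $e_k$ by a bundle of internally disjoint paths whose lengths form an interval $[y_k, y_k+M]$, chosen just below $p_k$. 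The requirement is that some cycle using $e_k$ as its unique cut edge, together with a suitable route through the rest, can be tuned to have length exactly $p_k$. Conversely, any cycle whose largest-index cut edge is $e_k$ but which uses at least one further cut edge has length in $(p_k,p_k+N_k]$, provided $y_k+M\le p_k$ is arranged. Such a cycle therefore avoids $L$. The same holds for two-sided cycles using cut edges, since they use an even number $\geq 2$ of cut edges. Thus all two-sided cycles avoid $L$.

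For the robustness part, I will take the wall of height and width much larger than $\ell$ and use at least $2\ell+1$ cut edges, ordered so that for each $k$ there are many disjoint ``routes'' that close $e_k$ into a one-sided cycle. After deleting any $X$ with $|X|\le\ell$, at least $\ell$ indices $k$ survive for which $e_k$'s bundle is intact and some route is intact. For each such $k$ we obtain a one-sided cycle through $e_k$ alone. Its length lies in $[y_k, y_k+M]+R_k$, where $R_k$ is the length of the route. Choosing the bundle interval long enough, namely $M$ larger than the spread of all route lengths, and setting $y_k$ accordingly, we can hit $p_k$ exactly. The $p_k$ are distinct, so this gives at least $\ell$ distinct lengths in $L$.

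The main obstacle is making these two requirements compatible. The bundle must be flexible enough that every surviving route can be completed to length exactly $p_k$, yet every cycle using $e_k$ together with another cut edge must be pushed strictly above $p_k$. I expect to resolve this with the strict growth $N_k \gg$ all earlier lengths. The delicate point is that routes for $e_k$ may themselves pass through other, smaller cut edges, and then they are no longer single-crossing. I would first try to restrict to routes avoiding all other cut edges, using the wall structure on the non-cut side. I would then check carefully that $y_k+M+R \le p_k$ holds for all such routes while $y_k + y_i > p_k$ for every $i$. If the latter fails, I would instead replace ``adding another cut edge'' by ``adding at least $y_1$'', where $y_1$ is taken larger than the spread $M$.
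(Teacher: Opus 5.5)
\textbf{Gap: the bundle gadget creates uncontrolled $L$-cycles.}
Your overall architecture is the paper's. Grid edges are weighted by distinct elements of the set from \zcref{lem:farfromL}. The crosscap edges get rapidly growing lonely targets $p_k$. Every $L$-cycle is forced to use exactly one crosscap edge, and any two such cycles are forced to meet. The step that fails is the bundle gadget.

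If $e_k$ is replaced by internally disjoint paths of lengths $y_k,\dots,y_k+M$ with $M\ge 1$, then two paths $a\neq b$ of the same bundle already form a two-sided cycle of length $a+b$. Since your routes avoid the other cut edges, their lengths and their spread are bounded by the total $S$-weight $\alpha$. So with the natural choice of $M$, $y_k=p_k-O(\alpha)$ and this length is roughly $2p_k$. That is far outside the window $(p_k,p_k+N_k]$ that loneliness controls, and porosity says nothing about numbers near $2p_k$. Your parity and case analysis implicitly assumes each cut edge is traversed at most once, which is false once an edge becomes a bundle.

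Concretely, take the porous set $L=\mathbb{N}\setminus\bigcup_n[10^n,10^n+n]$. Every $p\in L$ with $p+1\notin L$ equals $10^n-1$ for some $n$, and the required gap length forces $n$ to be large compared with $\alpha$. Every integer strictly between $10^n+n$ and $10^{n+1}$ lies in $L$, so the bundle-internal cycles of length about $2\cdot 10^n$ are $L$-cycles. Two bundles on cut edges with disjoint ends, which your robustness count needs, then give two vertex-disjoint $L$-cycles.

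\textbf{Missing idea: no tuning is needed.}
The paper keeps each crosscap edge a single path. It fixes one designated route $Q_i$ in the grid between the ends $a_i=(1,i)$ and $b_i=(6\ell,6\ell+1-i)$ of $e_i$, and sets $w(e_i)=p_i-w(Q_i)$. Other routes through $e_i$ may or may not close up to $L$-cycles. This is harmless, since all that is used is that every $L$-cycle contains exactly one $e_i$. The interleaving of the ends on the outer face then forces any two such cycles to meet.

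Robustness does not come from many routes per crosscap edge. In a wall or grid of bounded degree, more than $\ell$ internally disjoint routes between the two ends of $e_k$ cannot exist anyway. Instead it comes from using $3\ell$ crosscap edges whose designated routes form a half-integral linkage: every vertex lies on at most two of the $Q_i$. Any $X$ with $|X|\le\ell$ then misses at least $\ell$ of the cycles $Q_i+e_i$, and these have the distinct lengths $p_i\in L$.

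\textbf{Smaller point: the surface.}
Gluing the two boundary cycles of a cylinder gives a torus or a Klein bottle, not the projective plane. On those surfaces there are two disjoint cycles that each cross the seam once. What you want is a disc with a crosscap along its outer face, which is exactly what the edges $e_i$ with interleaved ends realize.
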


\begin{proof} 
    By subdividing edges, it suffices to consider edge-weighted graphs.  We construct $G$ and $w:E(G) \to \mathbb{N}$ as follows.  Start with the  $(6\ell \times 6\ell)$-grid $W$.
    By Lemma~\ref{lem:farfromL}, there exists an infinite subset $S \subseteq \mathbb{N}$ which is far from $L$. Choose an arbitrary subset $A$ of $S$ of size $|E(W)|$ and let $w:E(W) \to A$ be an arbitrary bijection. 
    
    For each $i \in [3\ell]$ let $a_i:=(1,i)$, $b_i:=(6\ell, 6\ell+1-i)$, and $Q_i$ be an $a_i$-$b_i$ path in $W$ such that every vertex of $W$ is in at most two such paths.   
    Say that an element $p \in L$ is \emph{$k$-lonely} if $q-p \geq k$, where $q$ is the smallest element of $L$ larger than $p$.  Since $L$ is porous, for each $k \in \mathbb{N}$, there are arbitrarily large $k$-lonely elements.  
    We construct a sequence of elements $p_1, \dots, p_{3\ell} \in L$ recursively as follows.  Let $\alpha$ be the sum of the elements in $A$.  Choose $p_1 \in L$ such that $p_1 \geq 2\alpha$  and $p_1$ is $\alpha$-lonely. Recursively choose $p_k$ so that $p_k > p_{k-1}$ and $p_k$ is $(\sum_{i=1}^{k-1} p_i + \alpha)$-lonely.  

    For each $i \in [3\ell]$ add an edge $e_i$ between $a_i:=(1,i)$ and $b_i:=(6\ell, 6\ell+1-i)$, and let $w(e_i):=p_i-w(Q_i)$, where $w(Q_i)=\sum_{e \in E(Q_i)} w(e)$.  See Figure~\ref{fig:escherwall} for an illustration of the graph $G$ and the paths $Q_1, \dots, Q_{3k}$.  
    Let $X \subseteq V(G)$ with $|X| \leq \ell$.  Since $|X| \leq \ell$ and each $x \in X$ intersects at most two of the paths $Q_1, \dots, Q_{3\ell}$, there exists $I \subseteq [3\ell]$ such that $|I|=\ell$ and $X \cap V(Q_i)=\emptyset$ for all $i \in I$. Thus, for each $i \in I$, the cycle $Q_i \cup e_i$ in $G-X$ has weight $p_i \in L$.  

    \begin{figure}[ht]
    \centering
        \begin{tikzpicture}[scale=1.25]

            \pgfdeclarelayer{background}
		      \pgfdeclarelayer{foreground}
			
		      \pgfsetlayers{background,main,foreground}

            \begin{pgfonlayer}{background}
            \pgftext{\includegraphics[width=6cm]{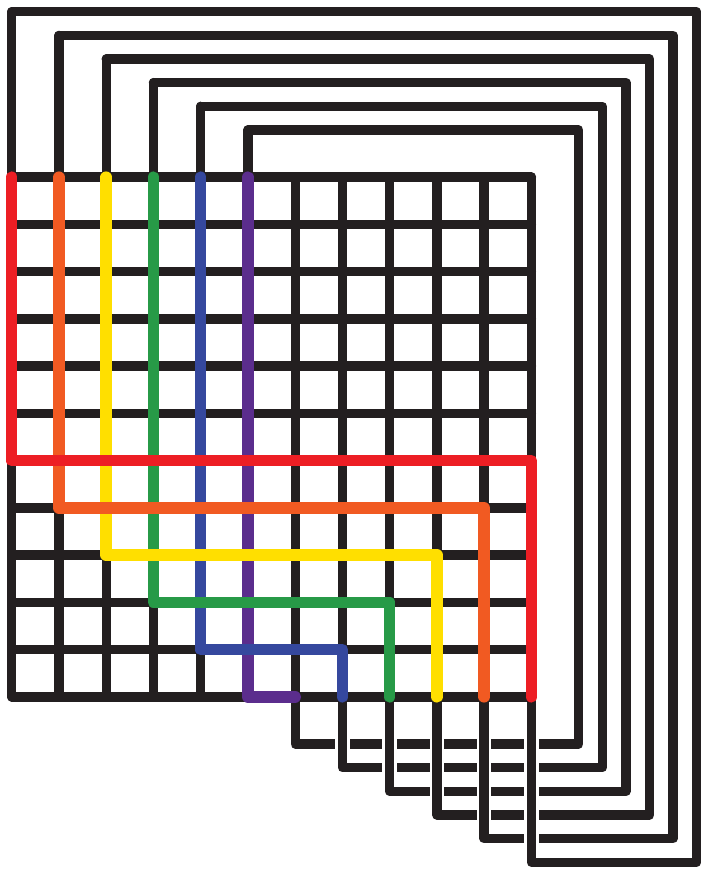}};
            \end{pgfonlayer}{background}
			
            \begin{pgfonlayer}{foreground}

            \end{pgfonlayer}{foreground}
        \end{tikzpicture}
    \caption{The graph $G$ and the half-integral packing of paths (indicated as a rainbow between the top left and the bottom right of the grid) used in the proof of~\zcref{thm:porousgrid}.}
    \label{fig:escherwall}
    \end{figure}
    
    To complete the proof, we now show that $G$ does not contain two vertex-disjoint $L$-cycles.  Let $C_1$ and $C_2$ be vertex-disjoint $L$-cycles in $G$.  Since $\sum_{a' \in A'} a' \notin L$ for all nonempty $A' \subseteq A$, $C_1$ and $C_2$ must each use an edge not in $W$.  For each $i \in [2]$, let $k_i$ be the maximum index such that $e_{k_i} \in E(C_i)$.   Suppose $e_j \in E(C_1)$ for some $j < k_1$. Note that 
    \[
    p_{k_1} = (p_{k_1}-\alpha)+\alpha < w(e_{k_1})+w(e_j) < w(C_1)  < \sum_{i=1}^{k_1} p_{i} + \alpha .
    \]
    Since $w(C_1) \in L$, this contradicts the fact that $p_{k_1}$ is $(\sum_{i=1}^{k_1-1} p_{i} + \alpha)$-lonely.  Thus, for each $i \in [2]$, $k_i$ is the unique index such that $e_{k_i} \in E(C_i)$.   Hence, $V(C_1) \cap V(C_2) \neq \emptyset$, since the paths $C_1 \setminus e_{k_1}$ and $C_2 \setminus e_{k_2}$ must intersect (by obvious topological considerations).
\end{proof}

By combining our two main theorems we also obtain the following instability result.  It says that for every $L \subseteq \mathbb{N}$ such that $L$-cycles have the Erd\H{o}s-P\'osa property, there is a set $L'$ which is `very close' to $L$ such that $L'$-cycles do not have the Erd\H{o}s-P\'osa property.  



\begin{corollary}
Let $L \subseteq \mathbb{N}$ be such that the set of $L$-cycles have the Erd\H{o}s-P\'osa property.  Then there exists $L' \subseteq \mathbb{N}$ such that $L'$ does not have the Erd\H{o}s-P\'osa property, and $L \Delta L'$ has lower density zero.  
\end{corollary}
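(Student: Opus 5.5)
Since $L$-cycles have the Erd\H{o}s-P\'osa property, Theorem~\ref{lowerdensity} implies that $L$ does \emph{not} have lower density zero. By Theorem~\ref{porous}, $L$ is also not porous, unless $L$ is finite. The plan is to build $L'$ by deleting from $L$ a lower-density-zero set of integers, arranged so that $L'$ becomes porous but stays infinite. Theorem~\ref{porous} then shows that $L'$-cycles fail even the weak form of the Erd\H{o}s-P\'osa property, so in particular they fail the standard one.

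First I handle the degenerate case where $L$ is finite. Then I take $L' := L \cup P$, where $P$ is an infinite lower-density-zero set, for instance the primes or the powers of $2$. The union $L \cup P$ is infinite, and it has lower density zero because $L$ is finite. By Theorem~\ref{lowerdensity}, $L'$-cycles do not have the Erd\H{o}s-P\'osa property. Moreover $L \Delta L' \subseteq P$, which has lower density zero. (If $L$ is empty, we may simply take $L' = P$.)

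Now suppose $L$ is infinite. I choose integers $m_1 < m_2 < \cdots$ growing very fast, say $m_{n+1} \geq 2^{m_n}$, and define
\[
B := \bigcup_{n \in \mathbb{N}} [m_n, m_n + n - 1], \qquad L' := L \setminus B.
\]
Then $\mathbb{N} \setminus L'$ contains the interval $[m_n, m_n+n-1]$ of length $n$ for every $n$, so $L'$ is porous. Also $L \Delta L' \subseteq B$. The set $B$ has density zero, since $|B \cap [N]| \leq \sum_{n : m_n \leq N} n$, and this is $O(\log^* N)^2$-ish, in any case $o(N)$. Density zero in particular gives lower density zero. This simple version only uses that $B$ has lower density zero; even the gentler choice $m_n = n^3$ would give density zero.

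The step I expect to need the most care is keeping $L'$ infinite, which Theorem~\ref{porous} requires. This is handled by $B$ being sparse. If $L \subseteq B$, then $L$ would have density zero, which contradicts the first observation that $L$ does not have lower density zero. So $L \setminus B$ is nonempty; applying the same argument to $L$ minus any finite set shows it is in fact infinite. Theorem~\ref{porous} therefore applies to $L'$, and $L'$-cycles do not have the Erd\H{o}s-P\'osa property, which completes the construction.
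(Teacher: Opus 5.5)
Your proposal is correct and follows the paper's proof. In the finite case you enlarge $L$ by an infinite lower-density-zero set and apply Theorem~\ref{lowerdensity}; in the infinite case you delete from $L$ its intersection with sparse, ever longer intervals (the paper uses $[3^t,3^t+t]$) and apply Theorem~\ref{porous}.

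The only difference is how you show that $L'$ stays infinite. You use Theorem~\ref{lowerdensity}: an infinite $L$ with the property has $\underline{d}(L)>0$, so it cannot lie inside $B$ plus a finite set. The paper uses Theorem~\ref{porous} instead: if $L'$ were finite, $L$ itself would be porous. Note that your opening claim $\underline{d}(L)>0$ needs $L$ to be infinite, which is the only case where you actually use it.
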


\begin{proof}
If $L$ is finite, then let $L'$ be any infinite set of lower density zero containing $L$.  By~\zcref{lowerdensity}, the set of $L'$-cycles do not have the $\frac{1}{t}$-integral Erd\H{o}s-P\'osa property for all $t \in \mathbb{N}$, and clearly $\underline{d}(L \Delta L')=0$.  So, we may assume that $L$ is infinite.

For each $t \in \mathbb{N}$, let $X_t:=[3^t, 3^t+t] \cap L$.  Define $X:=\bigcup_{t \in \mathbb{N}} X_t$ and $L':=L \setminus X$.  By construction, $L'$ is porous.  Moreover, $L'$ is infinite since $L$ is not porous by~\zcref{porous}.  Clearly, $\underline{d}(L \Delta L')=0$ and the set of $L'$-cycles do not have the Erd\H{o}s-P\'osa property by~\zcref{porous}.  
\end{proof}

\section*{Acknowledgements} These results were obtained slightly before the \emph{Focused Workshop on Erdős-Pósa problems} in Będlewo, Poland, March 15-20, 2026. Nonetheless, we are very grateful to the organizers of the workshop for providing an excellent research environment and the impetus to write this paper.  

\addcontentsline{toc}{section}{References}
\bibliographystyle{alphaurl}
\bibliography{literature}

\end{document}